\newcommand{\R}{\mathbb{R}}
\newcommand{\C}{\mathbb{C}}
\newcommand{\N}{\mathbb{N}}
\renewcommand{\S}{\mathbb{S}}
\newcommand{\cL}{\mathcal{L}}
\newcommand{\cG}{\mathcal{G}}
\newcommand{\cR}{\mathcal{R}}
\newcommand{\cP}{\mathcal{P}}
\newcommand{\Hd}{\dim_\mathrm{H}}
\newcommand{\Fd}{\dim_\mathrm{F}}
\newcommand{\Z}{\mathbb{Z}}
\newcommand{\spt}{\operatorname{spt}}
\renewcommand{\epsilon}{\varepsilon}
\renewcommand{\rho}{\varrho}
\renewcommand{\phi}{\varphi}
\newcommand{\Rea}{\operatorname{Re}}
\theoremstyle{plain}
\newtheorem{theorem}{Theorem}
\newtheorem{lemma}{Lemma}
\newtheorem{proposition}{Proposition}
\newtheorem{cor}{Corollary}
\newtheorem{corollary}[cor]{Corollary}
\newtheorem*{claim*}{Claim}
\theoremstyle{definition}
\newtheorem{definition}{Definition}
\newtheorem*{examples*}{Examples}
\newtheorem*{example*}{Example}
\newtheorem{question}{Question}
\newtheorem*{notations*}{Notations}
\newtheorem*{notation*}{Notation}
\numberwithin{equation}{section}
\numberwithin{figure}{section}
\numberwithin{theorem}{section}
\numberwithin{lemma}{section}
\numberwithin{proposition}{section}
\numberwithin{cor}{section}
\numberwithin{claim}{section}
\numberwithin{definition}{section}
\numberwithin{conjecture}{section}
\numberwithin{example}{section}
\numberwithin{remark}{section}
\numberwithin{notations}{section}
\numberwithin{notation}{section}
\numberwithin{question}{section}
\title{On Fourier analytic properties of graphs}
\author{Jonathan M. Fraser, Tuomas Orponen, and Tuomas Sahlsten}
\thanks{JMF is financially supported by an EPSRC doctoral training grant.  TO is financially supported by the Finnish National Doctoral Programme in Mathematics and its Applications. TS acknowledges the support from the Finnish Centre of Excellence in Analysis and Dynamics Research and Emil Aaltonen Foundation.}
\address{Mathematical Institute, University of St Andrews, North Haugh, St Andrews, Fife, KY16 9SS, Scotland} \email{jmf32@st-andrews.ac.uk}
\address{Department of Mathematics and Statistics, P.O. Box 68, 00014 University of Helsinki, Finland}
\email{tuomas.orponen@helsinki.fi} 
\address{Department of Mathematics, University of Bristol, University Walk, Clifton, Bristol, BS8 1TW, England}
\email{tuomas.sahlsten@bristol.ac.uk}
\subjclass[2010]{42B10 (Primary), 60G22, 28A80, 54E52 (Secondary).}
\begin{document}

\maketitle
\begin{abstract} We study the Fourier dimensions of graphs of real-valued functions defined on the unit interval $[0,1]$. Our results imply that the graph of fractional Brownian motion is almost surely not a Salem set, answering in part a question of Kahane from 1993, and that the graph of a Baire typical function in $C[0,1]$ has Fourier dimension zero.
\end{abstract}

\section{Introduction}

This paper is concerned with the decay of Fourier transforms of measures supported on graphs of real-valued functions defined on the unit interval $[0,1]$. Given such a function $f \colon [0,1] \to \R$, the \emph{graph of $f$} is, as usual, the set
$$ G_{f} := \{(x,f(x)) : x \in [0,1]\} \subset \R^{2}. $$
Suppose that $\mu$ is a Borel probability measure supported on $G_{f}$. How fast can the Fourier transform $\widehat{\mu}$ decay at infinity? To quantify this question, we look for exponents $s \geq 0$ such that
\begin{equation}\label{Fdim} |\widehat{\mu}(\xi)| \leq C|\xi|^{-s/2}, \qquad \xi \in \R^{2}, \end{equation}
for some constant $C > 0$. If $G_{f}$ supports a Borel probability measure $\mu$ satisfying \eqref{Fdim} for some exponent $s \leq 2$, we say that the \emph{Fourier dimension of $G_{f}$}, denoted by $\Fd G_{f}$, is at least $s$. Of course, the notion of Fourier dimension can be defined for all sets $K \subset \R^{2}$, not just graphs. In general, the number $\Fd K$ is the supremum over all exponents $s \leq 2$ such that \eqref{Fdim} holds for some Borel probability measure supported on $K$. This number never exceeds the Hausdorff dimension of $K$, denoted by $\Hd K$, see \cite[Section 12.17]{mattila} or \cite[Corollary 8.7]{wolff}.

To get a picture of the possible values the Fourier dimension can attain for graphs, one should keep in mind two `extremal' examples. The first one is the graph of a constant function; a horizontal line segment in $\R^{2}$. Then, no matter how one chooses a probability measure $\mu$ on $G_{f}$, the Fourier transform $\widehat{\mu}$ restricted to the $y$-axis will be a constant with absolute value one. This means that $\Fd G_{f} = 0$.

The opposite behaviour is manifest for graphs of smooth functions with non-vanishing second derivative. In this case it is well-known, see for instance \cite{kaufman}, that there exist non-zero measures supported on $G_{f}$, whose Fourier transforms satisfy \eqref{Fdim} with the exponent $s = 1$. Thus, $\Fd G_{f} = 1$.

So far we have seen that $\Fd G_{f}$ can attain the values zero and one. A straightforward application of the results in \cite{kaufman} shows that anything in between is possible as well:
\begin{proposition} \label{attainable} For any $s \in [0,1]$, there exists a function $f \in C[0,1]$ such that $\Fd G_{f} = s$. \end{proposition}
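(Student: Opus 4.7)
The extreme cases $s=0$ and $s=1$ have been settled by the preceding discussion: take $f$ constant for $s=0$, and any smooth function with nonvanishing second derivative (e.g.\ $f(x)=x^2$) for $s=1$. Only $s\in(0,1)$ remains to be constructed, and my plan is to exploit Kaufman's Salem-set construction from \cite{kaufman} directly.

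Let $E\subset[0,1]$ be a compact Salem set of Fourier dimension $s$, equipped with a Salem measure $\mu$ satisfying $|\widehat{\mu}(\xi)|\lesssim(1+|\xi|)^{-s/2}$. Define $f\in C[0,1]$ to be the cumulative distribution function $f(x):=\mu([0,x])$; then $f$ is continuous, non-decreasing, with $f(0)=0$ and $f(1)=1$, and $f$ is constant on every open interval of $[0,1]\setminus E$. I claim $\Fd G_f=s$, and the proof splits into matching upper and lower bounds.

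For the upper bound $\Fd G_f\leq s$, let $\sigma$ be any probability measure on $G_f$ and set $\rho:=(\pi_x)_*\sigma$; since $G_f$ is a graph, $\sigma=(\id,f)_*\rho$. Assume $|\widehat{\sigma}(\xi)|\to 0$ as $|\xi|\to\infty$ (as required for $\Fd G_f>0$). Then $\widehat{f_*\rho}(\xi_2)=\widehat{\sigma}(0,\xi_2)\to 0$, so $f_*\rho$ is Rajchman and hence atomless. Since $f$ is constant on each open interval of $[0,1]\setminus E$, an atomless $f_*\rho$ forces $\rho$ to assign zero mass to each such interval, so $\rho$ is concentrated on $E$. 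Then $\widehat{\sigma}(\xi_1,0)=\widehat{\rho}(\xi_1)$ cannot decay faster than $|\xi_1|^{-s/2}$, because $\rho$ lives on the set $E$ of Fourier dimension exactly $s$. This gives $\Fd G_f\leq s$.

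For the lower bound $\Fd G_f\geq s$, the natural candidate measure on $G_f$ is the pushforward $\nu:=(\id,f)_*\mu$. Using that $f_*\mu$ is the Lebesgue measure on $[0,1]$ (since $f$ is the CDF of $\mu$), the change of variables $y=f(x)$ with quantile function $g$ yields
$$\widehat{\nu}(\xi_1,\xi_2)=\int_0^1 e^{-2\pi i(\xi_1 g(y)+\xi_2 y)}\,dy.$$
In the regime $|\xi_2|\gtrsim|\xi_1|$, integration by parts in $y$ (using that $e^{-2\pi i\xi_1 g(y)}$ is of bounded variation in $y$ with total variation $\lesssim 1+|\xi_1|$) yields the desired decay. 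In the regime $|\xi_1|\gtrsim|\xi_2|$, one instead exploits the Salem decay of $\widehat\mu$ through an oscillatory-integral comparison between $\widehat\nu(\xi_1,\xi_2)$ and $\widehat\mu(\xi_1)$. The main obstacle is the intermediate regime where $|\xi_1|$ and $|\xi_2|$ are of comparable size: the BV argument just misses here, and I would expect a refined stationary-phase-type estimate for the non-smooth $f$ — likely paralleling the Kaufman-type machinery in \cite{kaufman} — to be needed in order to close the gap.
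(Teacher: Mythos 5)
Your upper bound is fine (and in fact close in spirit to the paper's: you use the Rajchman/atomless argument to force the projected measure onto $E$ and then the Hausdorff-dimension bound, where the paper uses an atom of a projected measure plus Wiener-type reasoning), but the lower bound $\Fd G_f \geq s$ is a genuine gap, and it is the heart of the matter. You never establish that the candidate measure $\nu=(\id,f)_*\mu$ on the graph of the CDF of a Salem measure has decay $|\widehat{\nu}(\xi)|\lesssim|\xi|^{-s/2}$ in \emph{all} directions. The decay you actually have is only near the coordinate axes: the integration-by-parts/BV estimate gives $|\widehat{\nu}(\xi)|\lesssim(1+|\xi_1|)/|\xi_2|$, which is $\lesssim|\xi|^{-s/2}$ only when $|\xi_1|\lesssim|\xi_2|^{1-s/2}$, far short of the full regime $|\xi_2|\gtrsim|\xi_1|$ you claim; and in the regime $|\xi_1|\gtrsim|\xi_2|$ the proposed ``comparison with $\widehat{\mu}(\xi_1)$'' is not an argument, since the perturbation $\xi_2 f(x)$ in the phase is of size $|\xi_2|$, which is unbounded, so no Lipschitz or stationary-phase-free comparison controls the difference. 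Whether the graph of the CDF of a generic Salem measure supports \emph{any} measure with uniform polynomial decay is exactly the kind of question this paper flags as hard (note that the whole point of Theorem \ref{atMost} is that graph structure obstructs decay), and your construction would need genuinely new oscillatory estimates, presumably exploiting randomness in the construction of $\mu$, to close it. As written, the proof establishes only $\Fd G_f\leq s$.

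The paper avoids this difficulty entirely by invoking Kaufman's theorem in a stronger form: for a $C^2$ curve $\Gamma$ with non-vanishing curvature and any $s\in(0,1)$, Kaufman produces a compact $S\subset\Gamma$ with $\Hd S=s$ carrying a measure $\mu$ with $|\widehat{\mu}(\xi)|\lesssim|\xi|^{-s/2}$ for all $\xi\in\R^2$; the Salem-type measure thus already lives on a subset of a graph, so the lower bound is free. The function is then extended affinely over the complementary intervals of $\pi(S)$, and the upper bound is split into two cases: a measure supported in $S$ cannot beat the exponent $s$ because $\Hd S=s$, while a measure charging one of the affine segments has a projection with an atom, whose Fourier transform does not vanish at infinity. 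If you want to salvage your write-up, replace the CDF construction by this one; your case analysis for the upper bound then transfers with only minor changes.
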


To the best of our knowledge, the existence of graphs with $\Fd G_{f} > 1$ is an open question. In his 1993 survey \cite{kahane}, Kahane writes: ``\emph{...proving almost sure roundedness for specific random sets is never easy and it remains an open program for most natural random sets: level sets and graphs of random functions in particular.}" The word `roundedness' has the following meaning here: in the terminology of \cite{kahane}, a set $K \subset \R^{2}$ is \emph{round}, if $$\Fd K = \Hd K.$$
In recent literature, such sets $K$ are often referred to as \emph{Salem sets}.  The question of whether graphs of random functions are Salem was raised again and formalised by Shieh and Xiao \cite[Question 2.15]{xiao} where they ask ``\emph{Are the graph and level sets of a stochastic process such as fractional Brownian motion Salem sets?}" and attribute the origin of the question to Kahane.

Perhaps the most classical example of a random process producing functions in $C[0,1]$ is one-dimensional Brownian motion. In 1953, Taylor \cite{taylor} proved that the graph of one-dimensional Brownian motion has Hausdorff dimension $3/2$ almost surely. Moreover, in 1977, Adler \cite{Adl} showed that the graph of fractional Brownian motion on $[0,1]$ with Hurst index $0 < H < 1$ has Hausdorff dimension $2-H$ almost surely. So, in order for these graphs to be Salem sets, or `round', also the Fourier dimension should be $2-H$ almost surely. Our first main result shows that this is definitely not the case:
\begin{theorem}\label{atMost} 
For any function $f : [0,1] \to \R$, we have
$$\Fd G_f \leq 1.$$
\end{theorem}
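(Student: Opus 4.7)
The plan is to argue by contradiction using a ``slicing'' of the Fourier transform in the vertical frequency direction. Suppose $\Fd G_f > 1$, so there is a Borel probability measure $\mu$ on $G_f$ and an exponent $s > 1$ with $|\widehat{\mu}(\xi)| \le C|\xi|^{-s/2}$ for all $\xi \in \R^{2}$. Since $G_f$ is a graph, the projection $\pi(x,y) = x$ restricted to $G_f$ is a bijection onto $[0,1]$, so $\mu = \Phi_{*}\nu$, where $\nu := \pi_{*}\mu$ is a Borel probability measure on $[0,1]$ and $\Phi(x) = (x,f(x))$.

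Fix $\xi_{2} \in \R \setminus \{0\}$ and define the complex Borel measure $\nu_{\xi_{2}}$ on $\R$ by $d\nu_{\xi_{2}}(x) := e^{-2\pi i \xi_{2}f(x)}\,d\nu(x)$. A direct computation gives the key identity
$$\widehat{\mu}(\xi_{1},\xi_{2}) = \int_{0}^{1} e^{-2\pi i \xi_{1}x}\,d\nu_{\xi_{2}}(x) = \widehat{\nu_{\xi_{2}}}(\xi_{1}), \qquad \xi_{1}\in\R.$$
Integrating the decay hypothesis along the horizontal line at height $\xi_{2}$ and changing variables $u = \xi_{1}/|\xi_{2}|$,
$$\int_{\R} |\widehat{\nu_{\xi_{2}}}(\xi_{1})|^{2}\,d\xi_{1} \leq C^{2}\int_{\R}(\xi_{1}^{2}+\xi_{2}^{2})^{-s/2}\,d\xi_{1} = c_{s}\,|\xi_{2}|^{1-s},$$
where $c_{s} = C^{2}\int_{\R}(1+u^{2})^{-s/2}\,du < \infty$ because $s > 1$.

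Now I invoke the fact that a finite complex Borel measure whose Fourier transform lies in $L^{2}(\R)$ must in fact be absolutely continuous with $L^{2}$ density (one compares the measure, viewed as a tempered distribution, with the inverse Fourier transform of its $L^{2}$ Fourier transform; by injectivity on tempered distributions they coincide, and by Plancherel the $L^{2}$ norms match). Hence $\nu_{\xi_{2}}$ has an $L^{2}$ density $g_{\xi_{2}}$ with $\|g_{\xi_{2}}\|_{2}^{2} = \|\widehat{\nu_{\xi_{2}}}\|_{2}^{2} \leq c_{s}|\xi_{2}|^{1-s}$.

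The final step is the crucial observation that $\nu$ controls the total variation: from $e^{-2\pi i \xi_{2}f(x)}\,d\nu(x) = g_{\xi_{2}}(x)\,dx$, taking absolute values of both sides as measures yields $d\nu(x) = |g_{\xi_{2}}(x)|\,dx$. Therefore $\nu$ itself has an $L^{2}$ density, independent of $\xi_{2}$, whose $L^{2}$ norm is bounded by $c_{s}^{1/2}|\xi_{2}|^{(1-s)/2}$. Letting $|\xi_{2}| \to \infty$ forces $\nu \equiv 0$, contradicting $\nu([0,1]) = 1$. The only delicate ingredient is the Plancherel converse, but once it is in hand the argument is essentially algebraic; I do not anticipate any real obstacle.
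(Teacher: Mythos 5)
Your argument is correct, and it is genuinely different from the paper's. The paper deduces Theorem \ref{atMost} from a slicing lemma (Lemma \ref{slicing}): decay $|\widehat{\mu}(\xi)| \lesssim |\xi|^{-\tau/2}$ with $\tau>1$ forces the projection $\pi_{\sharp}\mu$ to be absolutely continuous and, via the disintegration of $\mu$ into sliced measures $\nu_t$ on vertical lines together with an energy-integral inequality from \cite{orponen}, forces $\cL^{1}$-positively many vertical lines to meet $\spt\mu$ in a set of positive Hausdorff dimension --- contradicting the fact that a graph meets each vertical line in at most one point. You instead twist the projected measure by the vertical phase, $d\nu_{\xi_2}=e^{-2\pi i\xi_2 f}\,d\nu$, observe $\widehat{\mu}(\xi_1,\xi_2)=\widehat{\nu_{\xi_2}}(\xi_1)$, integrate the decay hypothesis along the horizontal frequency line (convergent precisely because $s>1$), and use the Plancherel converse plus the key remark that the total variation $|\nu_{\xi_2}|=\nu$ is independent of $\xi_2$; letting $|\xi_2|\to\infty$ kills the density of $\nu$ and gives a direct contradiction. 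This is more elementary and self-contained --- no slicing theory or energy integrals --- and it isolates exactly what is used: decay in the vertical frequency direction with square-integrability in the horizontal one. What the paper's route buys in exchange is the geometric slicing statement (positive-dimensional vertical sections), which has independent interest. One small point you should tighten: the theorem allows arbitrary, possibly non-measurable $f$, so ``$\mu=\Phi_{*}\nu$'' and the measurability of $x\mapsto e^{-2\pi i\xi_2 f(x)}$ are not automatic. The fix is routine: replace $G_f$ by the closed set $\spt\mu\subset G_f$; intersecting with the strips $[0,1]\times[-n,n]$ shows $\spt\mu$ is a countable union of compact graph pieces, on each of which $f$ restricts to a continuous function, so $f$ is Borel on a $\sigma$-compact set of full $\nu$-measure and your key identity $\widehat{\mu}(\xi_1,\xi_2)=\widehat{\nu_{\xi_2}}(\xi_1)$ goes through verbatim.
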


Unfortunately, we were unable to settle exactly `how round' the graph of fractional Brownian motion is; we conjecture that the Fourier dimension is almost surely one.  The fact that graphs of fractional Brownian motion are not Salem is in sharp contrast with the result of Kahane \cite{Ka1} that the \textit{image} of any Borel set $E \subset \R$ under fractional Brownian motion is a Salem set in $\R$ almost surely. Moreover, it was shown by Kahane in \cite{kahaneCourse} that the \textit{level sets} of ordinary Brownian motion are Salem almost surely and recently in \cite{FM} Fouch\'{e} and Mukeru extend the results for fractional Brownian motion.

Our second main result concerns the Fourier dimension of the graph of a \emph{typical} function in $C[0,1]$. Let us recall the notion of `typicality' before proceeding. In a general complete metric space $X$, a set $M \subset X$ is said to be \emph{meagre}, if it can be written as a countable union of nowhere dense sets, and a set $\cR \subseteq X$ is \emph{residual}, if $X \setminus \cR$ is meagre. A property is called \emph{typical} in the space $X$, if the set of points which have the property is residual.  Observe that $C[0,1]$ is a complete metric space when equipped with the $\sup$-norm $\| \cdot \|_\infty$, so talking of `typical properties in $C[0,1]$' makes sense.  

There are several previous results in the literature describing the properties of graphs of typical $C[0,1]$-functions.  Mauldin and Williams \cite{graphsums} observed that the graph of a typical function in $C[0,1]$ has Hausdorff dimension one, while Humke and Petruska \cite{humkepacking} showed that the graph of a typical function in $C[0,1]$ has packing dimension two.  More recently, Hyde \emph{et al.}  \cite{bairefunctions} proved that in the more general space $C(E)$, for some compact set $E \subseteq \mathbb{R}$, the typical lower box dimension of a graph is the lower box dimension of $E$, and the typical upper box dimension of a graph is the maximum value possible (the \emph{upper graph box dimension}); interestingly, this value may be strictly less than the upper box dimension of $E$ plus one.  The problem of computing the upper graph box dimension for an arbitrary set $E$ can be awkward and has been investigated further in \cite{graphdim}.  

Our second main result implies that the graph of a typical function in $C[0,1]$ has Fourier dimension zero. Again, this value is strictly smaller than the corresponding value for the Hausdorff dimension, which is one according to the result in \cite{graphsums}. 

\begin{theorem}\label{noDecay} 
For a typical function $f \in C[0,1]$, we have
$$ \limsup_{|\xi| \to \infty} |\widehat{\mu}(\xi)| \geq \frac{1}{5}$$
for any Borel probability measure $\mu$ supported on $G_{f}$. In particular, the Fourier dimension of the typical graph equals zero.
\end{theorem}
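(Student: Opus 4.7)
The approach is a Baire category argument. For each pair $n, k \in \N$ I would set
$$A_{n,k} := \{ f \in C[0,1] : \exists \text{ a Borel probability measure } \mu \text{ on } G_f \text{ with } \sup_{|\xi| \geq n} |\widehat{\mu}(\xi)| \leq 1/5 - 1/k \};$$
a function $f$ fails the conclusion of the theorem precisely when $f \in \bigcup_{n,k} A_{n,k}$, so it suffices to prove each $A_{n,k}$ meagre. The Fourier-dimension claim then follows at once since $\limsup|\widehat{\mu}(\xi)| \geq 1/5$ is incompatible with any polynomial decay $|\widehat{\mu}(\xi)| \leq C|\xi|^{-s/2}$ with $s > 0$. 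A standard weak-$*$ compactness argument shows each $A_{n,k}$ is closed: given $f_j \to f$ uniformly with witnesses $\mu_j$, the measures lie in a common compact rectangle, a weak-$*$ subsequential limit $\mu$ is supported on $G_f$, and pointwise convergence $\widehat{\mu_j}(\xi) \to \widehat{\mu}(\xi)$ preserves the Fourier bound.

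The core of the proof is showing that $A_{n,k}$ has empty interior. Given $f_0 \in C[0,1]$ and $\eta > 0$, I would construct $g \in C[0,1]$ with $\|g - f_0\|_\infty < \eta$ such that every Borel probability $\mu$ on $G_g$ satisfies $\sup_{|\xi| \geq n} |\widehat{\mu}(\xi)| > 1/5$. Choose a large integer $T \geq n$ with $2/T < \eta$, partition $[0,1]$ into short intervals $J_i$ on which $\operatorname{osc}(f_0, J_i) < \eta/2$, and build $g$ as follows: on a \emph{plateau} subinterval of each $J_i$ set $g \equiv c_i$, where $c_i \in T^{-1}\Z$ is the nearest grid point to $f_0|_{J_i}$; connect adjacent plateaus by thin \emph{transition} subintervals on which $g$ interpolates continuously. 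Then $\|g - f_0\|_\infty < \eta$. For any probability $\mu$ on $G_g$ with $x$-projection $\nu$, writing $\alpha := \nu(\text{transitions})$, the identity $Tg(x) \in \Z$ on plateaus yields
$$\widehat{\mu}(0, T) = (1 - \alpha) + \int_{\text{trans}} e^{-2\pi i T g(x)}\, d\nu(x),$$
so that $|\widehat{\mu}(0, T)| \geq 1 - 2\alpha$. If $\alpha \leq 2/5$, this exceeds $1/5$, placing $g$ outside $A_{n,k}$ at frequency $(0, T)$ with $|\xi| = T \geq n$.

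The hard part will be the case $\alpha > 2/5$, when $\mu$ concentrates on the thin transitions. My plan is to iterate the construction hierarchically: within each level-$0$ transition, embed another plateau-and-transition pattern at a larger frequency $T_1 = K_0 T$, and recurse to scales $T_0 < T_1 < T_2 < \cdots$ with $T_j \mid T_k$ for $j \leq k$, so that every level-$j$ plateau height sits in $T_k^{-1}\Z$ for all $k \geq j$. At frequency $T_k$, all plateau contributions from levels $\leq k-1$ add constructively and one still has $|\widehat{\mu}(0, T_k)| \geq 1 - 2\alpha_k$, where $\alpha_k$ is the $\nu$-mass of depth-$k$ transitions. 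Either some finite $k$ has $\alpha_k \leq 2/5$ (and we conclude at $T_k \geq n$), or $\alpha_k \downarrow \alpha_\infty \geq 2/5$, forcing $\mu$ to place mass $\alpha_\infty$ on the Cantor-type limit $\bigcap_k \text{trans}_k$. The residual case is the main technical hurdle; it should be handled by a supplementary almost-periodic argument exploiting the arithmetic structure of heights on $\bigcup_k T_k^{-1}\Z$ together with Bohr recurrence to produce arbitrarily large $|\xi|$ with $|\widehat{\mu}(\xi)| \geq 1/5$. Once this is done, each $A_{n,k}$ is nowhere dense, Baire yields the desired residual set, and Theorem~\ref{noDecay} follows.
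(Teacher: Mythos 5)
Your overall Baire-category frame is sound and close in spirit to the paper's: the closedness of $A_{n,k}$ via weak-$*$ compactness (Hausdorff convergence of $G_{f_j}$ to $G_f$ plus pointwise convergence of Fourier transforms) is a legitimate alternative to the paper's quantitative openness argument, which instead perturbs measures along the map $(x,g(x))\mapsto(x,h(x))$ and uses the $2\pi|\xi|\,\|g-h\|_\infty$ Lipschitz bound. The plateau construction and the estimate $|\widehat{\mu}(0,T)|\geq 1-2\alpha$ for the plateau-heavy case also match the paper's treatment of its ``horizontal piece''.

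The genuine gap is exactly where you place it: the transition-heavy case $\alpha>2/5$. This case is not exotic --- a probability measure on $G_g$ can sit entirely on a single transition segment --- and your proposed resolution (hierarchically embedding plateau patterns inside transitions at frequencies $T_0\mid T_1\mid\cdots$, then invoking an unspecified almost-periodic/Bohr recurrence argument for measures concentrating on the Cantor-type intersection of all transition layers) is a sketch of a hope, not a proof; nothing in it forces $|\widehat{\mu}(\xi)|\geq 1/5$ at arbitrarily large $\xi$ for a measure living on the limit set, and the bookkeeping of constructive interference across infinitely many scales is precisely the part that is never carried out. The paper avoids this entirely by a symmetric one-scale trick: the transitions are made \emph{nearly vertical}, so that their $x$-projections lie in a small neighbourhood of the same grid $N^{-1}\Z$ (i.e.\ in $\{t:\cos(2\pi Nt)\geq 0.99,\ \cos(4\pi Nt)\geq 0.99\}$). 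Then whichever piece carries $\mu$-mass at least $1/2$, one tests the dual pair of frequencies --- $(0,N),(0,2N)$ for the horizontal piece or $(N,0),(2N,0)$ for the vertical piece --- and the two-frequency estimate, using $\cos(2\pi Nt)+\cos(4\pi Nt)\geq -9/8$ to control the possibly destructive contribution of the other half of the mass, gives $\max$ of the four values $>1/5$ uniformly over all $\mu\in\cP(G_g)$. Unless you can actually close your residual case, you should replace the hierarchy by this idea: project the troublesome transition mass onto the $x$-axis, where your own grid structure already does the work.
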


Following the idea of Mauldin and Williams in \cite[Theorem 2]{graphsums}, Theorem \ref{noDecay} has the following consequence:

\begin{corollary}
Any function $f \in C[0,1]$ can be written as a sum 
$$f = f_{1} + f_{2},$$ 
where $f_{1},f_{2} \in C[0,1]$ are functions such that $\widehat{\mu} \not\to 0$ for all Borel probability measures $\mu$ supported on either $G_{f_{1}}$ or $G_{f_{2}}$.
\end{corollary}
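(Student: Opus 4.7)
The plan is a direct Baire category argument in the spirit of Mauldin and Williams \cite{graphsums}, using Theorem \ref{noDecay} as the sole input. First I would let $\mathcal{R} \subset C[0,1]$ denote the set of continuous functions $g$ for which $\limsup_{|\xi| \to \infty} |\widehat{\mu}(\xi)| \geq 1/5$ holds for every Borel probability measure $\mu$ supported on $G_g$. By Theorem \ref{noDecay}, $\mathcal{R}$ contains the residual set of typical functions in $C[0,1]$, and is therefore itself residual in the complete metric space $(C[0,1],\|\cdot\|_{\infty})$.

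Now fix $f \in C[0,1]$ and consider the map $T \colon C[0,1] \to C[0,1]$ defined by $T(g) := f - g$. This is an isometry of $(C[0,1],\|\cdot\|_{\infty})$ — in fact an involution, since $T \circ T = \id$ — so in particular a self-homeomorphism. Self-homeomorphisms preserve residuality, so $T(\mathcal{R})$ is residual, and by the Baire category theorem the intersection $\mathcal{R} \cap T(\mathcal{R})$ is also residual, hence nonempty. Picking any $f_1 \in \mathcal{R} \cap T(\mathcal{R})$ and setting $f_2 := f - f_1$, membership $f_1 \in T(\mathcal{R})$ means $f_1 = T(h)$ for some $h \in \mathcal{R}$; applying $T$ and using the involution property yields $h = T(f_1) = f - f_1 = f_2$, so $f_2 \in \mathcal{R}$ as well. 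Thus $f = f_1 + f_2$ with both $f_1, f_2 \in \mathcal{R}$, which is exactly the conclusion of the corollary (since $\limsup |\widehat{\mu}(\xi)| \geq 1/5$ certainly forbids $\widehat{\mu} \to 0$).

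I do not expect any genuine obstacle here: all the content is already contained in Theorem \ref{noDecay}, the translation from ``typical'' to ``residual'' is immediate from the definitions, and the step $\mathcal{R}\cap T(\mathcal{R})\neq \emptyset$ is standard Baire category once one notices that addition by a fixed function is a self-homeomorphism of $C[0,1]$.
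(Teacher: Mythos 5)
Your proposal is correct and is exactly the argument the paper has in mind when it invokes the idea of Mauldin and Williams: the set $\mathcal{R}$ of functions with the non-decay property is residual by Theorem \ref{noDecay}, the map $g \mapsto f - g$ is a self-homeomorphism of $C[0,1]$ preserving residuality, and picking $f_1 \in \mathcal{R} \cap (f - \mathcal{R})$ with $f_2 = f - f_1$ gives the decomposition. No gaps.
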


We remark that Theorems \ref{atMost} and \ref{noDecay} can be easily extended to the space $C(E)$, for any closed set $E \subseteq [0,1]$, whereas the $C(E)$ analogue for Proposition \ref{attainable} remains open. These points are elaborated on in Section \ref{spaceE}.

\section{Notation}

For a compact set $E \subset \R$, we write $C(E)$ for the space of all continuous functions $f : E \to \R$, endowed with the $\sup$-norm $\|\cdot\|_\infty$. If $K \subset \R^{2}$ is any set, we write $\cP(K)$ for the family of all Borel probability measures supported on $K$. The Hausdorff dimension of a set $K$ is denoted by $\Hd K$, see \cite[Definition 4.8]{mattila}. For $\mu \in \cP(\R^{2})$, we define the \textit{Fourier transform} $\widehat{\mu} \colon \R^{2} \to \C$ by
$$\widehat{\mu}(\xi) := \int_{\R^{2}} e^{-2\pi i x \cdot \xi} \, d\mu(x). $$
Similar notations and definitions are used for measures and their Fourier transforms in $\R$. Throughout the paper we write $a \lesssim b$, if $a \leq Cb$ for some constant $C \geq 1$. Should we wish to emphasize that $C$ depends on some parameter $p$, we may write $a \lesssim_{p} b$. With this notation, the \textit{Fourier dimension} of a set $K \subset \R^{2}$ is defined by
$$\Fd K = \sup\{0 \leq s \leq 2 : \text{there exists } \mu \in \cP(K) \text{ such that } |\widehat{\mu}(\xi)| \lesssim |\xi|^{-s/2} \text{ for } \xi \in \R^2\}.$$
This definition (with probability measures replaced by non-zero positive measures) appears for instance in \cite[Section 12.17]{mattila}.

\section{Proofs}

\subsection{Proof of Proposition \ref{attainable}} \label{Attainable}

We recite a theorem by Kaufman from \cite{kaufman}: \textit{if $\Gamma$ is a $C^{2}$ curve in the plane with positive curvature, then for each $s \in (0,1)$ there exists a compact set $S \subset \Gamma$ with $\Hd S = s$ and a positive measure $\mu$ on $S$ such that $|\widehat{\mu}(\xi)| \lesssim |\xi|^{-s/2}$ for $\xi \in \R^{2}$.}

In our situation, we let $\Gamma = G_{f}$ be the graph of any $C^{2}$ function $f \colon [0,1] \to \R$ with non-vanishing second derivative. Fixing $s \in (0,1)$, we choose the compact set $S \subset G_{f}$ as in Kaufman's theorem, and denote by $\pi(S) \subset [0,1]$ the projection of $S$ under the mapping $\pi(x,y) = x$. We first deform $f$ into a function $g \colon [0,1] \to \R$ as follows. For $t \in \pi(S)$, we set $g(t) = f(t)$. The complement of $\pi(S)$ in $[0,1]$ is a countable union of open intervals (possibly half-open in two cases). We require that 
\begin{itemize}
\item[(a)] $g$ is affine on these complementary intervals, and
\item[(b)] $g \in C[0,1]$. 
\end{itemize}
If $\{0,1\} \subseteq \pi(S)$, as we may assume, these conditions determine $g$ uniquely.

It remains to establish that $\Fd G_{g} = s$. Since the measure $\mu$ given by Kaufman's theorem is supported on $S \subset G_{g}$, we already have $\Fd G_{g} \geq s$. To prove the converse inequality, we have to show that no matter how we choose a measure $\nu \in \cP(G_{g})$, the Fourier transform $\widehat{\nu}$ cannot satisfy the uniform bound 
\begin{equation}\label{form7} |\widehat{\nu}(\xi)| \lesssim |\xi|^{-t/2}, \qquad \xi \in \R^{2}, \end{equation}
for any $t > s$. So, fix a measure $\nu \in \cP(G_{g})$. There are two cases: either $\spt \nu \subset S$, or $\spt \nu \not\subset S$. In the former case, the non-existence of numbers $t > s$ satisfying \eqref{form7} follows immediately from $\Hd S = s$, since the inequality \eqref{form7} always implies $\Hd S \geq t$.

In the latter case, we claim that $\widehat{\nu}$ cannot tend to zero at infinity. First, find a line segment $J \subset G_{g}$ with $\nu(J) > 0$: such an object exists, because $\spt \nu \not\subset S$. Let $e \in \S^1$ be a unit vector in the orthogonal complement of $J$, and consider the projection $\pi_{e} \colon \R^{2} \to \R$, defined by $\pi_{e}(x) = e \cdot x$. Writing $\nu_{e} := \pi_{e\sharp}\nu$ for the image of $\nu$ under $\pi_{e}$, it follows by inspecting the definitions of $\widehat{\nu}$ and $\widehat{\nu_{e}}$ that
\begin{displaymath} \widehat{\nu_{e}}(t) = \widehat{\nu}(te), \qquad t \in \R. \end{displaymath}
Moreover, the measure $\nu_{e}$ has an atom of mass $\nu(J) > 0$ concentrated at the singleton $\pi_{e}(J) \subset \R$. Combining these observations, the fact that $\widehat{\nu}$ does not tend to zero is a consequence of the following well-known general principle: if a probability measure on the real line has an atom, then its Fourier transform does not tend to zero at infinity. We sketch the argument for completeness.

It is clear that
\begin{equation}\label{form8} \sum_{t \in \R} \nu_{e}(\{t\})^{2} = \lim_{\lambda \searrow 0} \int \nu_{e}([t - \lambda, t + \lambda]) \, d\nu_{e}(t). \end{equation}
Now, let $\psi \colon \R \to \R$ be a positive rapidly decaying smooth function satisfying $\psi|_{[-1,1]} \geq 1$ and $\spt \widehat{\psi} \subset [-R,R]$ for some $R > 0$. Write $\psi_{\lambda}(t) := \psi(t/\lambda)$.  Using Parseval's formula and the convolution rule, we may continue \eqref{form8} as follows:
\begin{align*} \sum_{t \in \R} \nu_{e}(\{t\})^{2} & \leq \liminf_{\lambda \searrow 0} \int \nu_{e} \ast \psi_{\lambda}(t) \, d\nu_{e}(t)\\
& = \liminf_{\lambda \searrow 0} \int |\widehat{\nu_{e}}(\xi)|^{2}\widehat{\psi_{\lambda}}(\xi) \, d\xi\\
& \lesssim \liminf_{\lambda \searrow 0} \lambda \int_{-R/\lambda}^{R/\lambda} |\widehat{\nu_{e}}(\xi)|^{2} \, d\xi.  \end{align*}
Since $\widehat{\nu_{e}}(\{t\}) > 0$ for some $t \in \R$, the preceding chain of inequalities shows that $\widehat{\nu_{e}}$ cannot tend to zero at infinity. This completes the proof of Proposition \ref{attainable}.

\subsection{Proof of Theorem \ref{atMost}}

The proof is based on the following auxiliary result.

\begin{lemma}
\label{slicing}
 Let $\mu \in \cP(\R^{2})$ be a measure, whose Fourier transform satisfies the condition 
 $$|\widehat{\mu}(\xi)| \lesssim |\xi|^{-\tau/2}$$ 
 for some $\tau > 1$. Denote by $\mu_1$ the projection of $\mu$ onto the $x$-coordinate. Then $\mu_1 \ll \cL^{1}$. Moreover, there are $\cL^{1}$ positively many vertical lines $L_{t} := \{(t,y) : y \in \R\}$, $t \in \R$, such that 
$$ \Hd (L_{t} \cap \spt \mu) > 0. $$
\end{lemma}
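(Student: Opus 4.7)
The plan is to extract both conclusions from the observation that the pointwise Fourier bound with $\tau > 1$ places $\mu$ just above the one-dimensional threshold, both in the sense of $L^{2}$-integrability of the Fourier transform and in the sense of finite $s$-energy for some $s > 1$. Once these two reformulations are in hand, the absolute continuity falls out of Plancherel and the slicing fact falls out of the standard energy-slicing inequality.

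For the absolute continuity of $\mu_{1}$, I would first note that the push-forward relation for the Fourier transform gives
\[\widehat{\mu_{1}}(\xi) = \widehat{\mu}(\xi,0), \qquad \xi \in \R.\]
The hypothesis therefore yields $|\widehat{\mu_{1}}(\xi)|^{2} \lesssim \min\{1, |\xi|^{-\tau}\}$, and since $\tau > 1$ this upper bound is integrable on $\R$. Plancherel's theorem then shows that $\mu_{1}$ has an $L^{2}$ density with respect to $\cL^{1}$; in particular $\mu_{1} \ll \cL^{1}$.

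For the slicing statement, the plan is to convert the pointwise Fourier decay into a uniform energy bound via the standard identity $I_{s}(\mu) \simeq \int |\widehat{\mu}(\xi)|^{2}|\xi|^{s-2}\,d\xi$; the hypothesis then guarantees $I_{s}(\mu) < \infty$ for every $s < \tau$. Picking some $s \in (1,\tau)$ and applying the slicing form of the Marstrand--Mattila theorem for the coordinate projection $\pi_{1}(x,y) = x$ (as in Mattila's book \cite{mattila}), one obtains
\[\int I_{s-1}(\mu_{t})\,d\cL^{1}(t) \lesssim I_{s}(\mu) < \infty,\]
where $\{\mu_{t}\}$ denotes the disintegration of $\mu$ along $\pi_{1}$. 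Consequently $I_{s-1}(\mu_{t}) < \infty$ for $\cL^{1}$-a.e.\ $t$, so Frostman's lemma yields $\Hd(\spt \mu_{t}) \geq s-1 > 0$ for all such $t$. Since the disintegration measure $\mu_{t}$ is concentrated on the fibre $L_{t} \cap \spt \mu$, the same lower bound passes to $\Hd(L_{t} \cap \spt \mu)$. Finally, because the first part of the lemma gives $\mu_{1} \ll \cL^{1}$ with total mass one, the set of $t$ for which $\mu_{t}$ is a nontrivial probability measure has positive $\cL^{1}$-measure.

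The only genuinely delicate point in the sketch is verifying that the disintegration measures $\mu_{t}$ are supported on $L_{t} \cap \spt \mu$ rather than on some larger set, but this is a standard consequence of the disintegration theorem once $\mu_{1}$ has been shown to be absolutely continuous. I do not expect a serious analytic obstruction beyond routing the argument through the appropriate slicing inequality.
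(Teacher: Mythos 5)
Your first step is exactly the paper's: $\widehat{\mu_1}(t)=\widehat{\mu}(t,0)$, so the hypothesis with $\tau>1$ gives $\widehat{\mu_1}\in L^2(\R)$ and hence $\mu_1\ll\cL^1$ by Plancherel; no issue there. The gap is in the slicing step. The inequality you invoke, $\int I_{s-1}(\mu_t)\,dt\lesssim I_s(\mu)$ for slices along a \emph{fixed} family of parallel lines, is not the Marstrand--Mattila slicing theorem: that theorem controls slice energies only after averaging over the Grassmannian of directions, and in its fixed-direction form the right-hand side must carry the anisotropic weight $|\pi_2(\xi)|^{s-2}$ rather than the radial weight $|\xi|^{s-2}$. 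Since $s-2<0$ and $|\pi_2(\xi)|\le|\xi|$, the weight actually needed is the larger one, so your comparison runs in the wrong direction, and the statement is in fact false for a fixed direction. A concrete counterexample: let $\mu=(\id,B)_\sharp\cL^1|_{[0,1]}$ be the lift of Lebesgue measure to the graph of Brownian motion. Almost surely $I_s(\mu)<\infty$ for every $s<3/2$ (this is how the lower bound $3/2$ for the Hausdorff dimension of the graph is proved), and $\pi_{1\sharp}\mu=\cL^1|_{[0,1]}$ is absolutely continuous, yet every vertical slice of $\mu$ is a single atom, so $I_{s-1}(\mu_t)=\infty$ on a set of positive measure. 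Put differently: if your inequality were true, the lemma would hold under the far weaker hypotheses ``$I_s(\mu)<\infty$ for some $s>1$ and $\pi_{1\sharp}\mu\ll\cL^1$'', and applied to graph measures this would show that no graph carries a measure of finite $s$-energy with $s>1$ --- contradicting the existence of graphs of dimension greater than one. So the reduction to $I_s(\mu)<\infty$ discards exactly the information (decay in \emph{all} directions, including the vertical frequency direction) that makes the lemma true.

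What the paper does instead is use the fixed-direction slicing inequality from Orponen's paper \cite{orponen}, namely $\int_\R I_{s-1}(\nu_t)\,dt\lesssim_s\int_{\R^2}|\pi_2(\xi)|^{s-2}|\widehat{\mu}(\xi)|^2\,d\xi$ for $1<s<2$, and then exploits the full pointwise bound $|\widehat{\mu}(\xi)|\lesssim(1+|\xi|)^{-\tau}{}^{/2}$ to verify that the weighted integral is finite for $1<s<\tau$; this last step is not automatic and is handled by averaging $|\theta\cdot e|^{s-2}$ over the circle to absorb the singular weight $|\pi_2(\xi)|^{s-2}$. The remainder of your outline (a.e.\ slice has finite $(s-1)$-energy, hence $\Hd(\spt\nu_t)\ge s-1>0$; the slices are supported on $L_t\cap\spt\mu$; and absolute continuity of $\mu_1$ guarantees nontrivial slices for positively many $t$) matches the paper and is fine once the correct weighted inequality replaces your appeal to $I_s(\mu)$.
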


Let us quickly see how to derive Theorem \ref{atMost} from the lemma. If there existed a graph $G_f$ with $\Fd G_{f} > 1$, then, by definition of Fourier dimension, we could find a measure $\mu \in \cP(G_f)$ satisfying 
$$|\widehat{\mu}(\xi)| \lesssim |\xi|^{-\tau/2}, \qquad \xi \in \R^2,$$ 
for some $\tau > 1$. Then Lemma \ref{slicing} tells us that positively many vertical lines intersect the support of $\mu$, in particular the graph $G_{f}$, in a set of positive dimension. This is absurd, since vertical lines cross graphs at no more than one point.

\begin{proof}[Proof of Lemma \ref{slicing}] The Fourier transform of $\mu_{1}$ has the formula
$$ \widehat{\mu}_1(t) = \widehat{\mu}(t,0), \quad t \in \R.$$
Combining this, the assumption on the decay of $|\widehat{\mu}|$ and Plancherel's formula implies that $\mu_1 \in L^{2}(\R)$ since $\tau > 1$. In particular, $\mu_1 \ll \cL^{1}$. According to \cite[Chapter 10]{mattila}, this means that the measure $\mu$ can be `sliced' using the vertical lines $L_{t}$, $t \in \R$. More precisely, there exists a collection of Borel measures $\nu_t$, $t \in \R$, such that
\begin{itemize}
\item[(a)] the measures $\nu_{t}$ are non-zero for $\cL^{1}$ positively many $t \in \R$, and  
\item[(b)] the support of $\nu_t$ is contained in the intersection $L_{t} \cap \spt \mu$,
\end{itemize}
and for all continuous functions $\varphi \colon \R^{2} \to \R$ we have the formula:
$$ \int \varphi \, d\nu_{t} = \lim_{\delta \searrow 0} \frac{1}{2\delta}\int\limits_{T(t,\delta)} \varphi \, d\mu.$$
Here $T(t,\delta)$ is the tube $\{(x,y) \in \R^2 : t - \delta \leq x \leq t + \delta\}$.

The energy integrals of the sliced measures $\nu_t$ were a central object of study in \cite{orponen}, and the following inequality is \cite[Lemma 3.10]{orponen}:

$$ \int_{\R} I_{s - 1}(\nu_t) \, dt \lesssim_{s} \int_{\R^{2}} |\pi_{2}(\xi)|^{s - 2}|\widehat{\mu}(\xi)|^{2} \, d\xi, \quad 1 < s < 2, $$
where $\pi_{2}(\xi) = \xi_{2}$ is the projection onto the second coordinate and
$$
I_{s-1}(\nu_t) = \int \int \frac{d\nu_t(x) \, d\nu_t(y)}{\lvert x-y \rvert^{s-1}}
$$
is the $(s-1)$-energy of $\nu_t$. Applying this and the decay bound for $|\widehat{\mu}|$ (in the form $|\widehat{\mu}(\xi)| \lesssim (1 + |\xi|)^{-\tau/2}$) for any $s \in (1,\tau)$ yields
\begin{equation}\label{form6} \int_{\R} I_{s - 1}(\nu_t) \, dt \lesssim_{s} \int_{\R^{2}} |\pi_{2}(\xi)|^{s - 2}(1 + |\xi|)^{-\tau} \, d\xi < \infty, \end{equation}
which means that $\cL^{1}$ almost all of the measures $\nu_t$, $t \in \R$, must have finite $(s - 1)$-energy. In particular, 
$$\Hd (L_{t} \cap \spt \mu) \geq \Hd (\spt \nu_{t}) \geq s - 1 > 0$$ 
for $\cL^{1}$ positively many $t \in \R$, using both (a) and (b). 

The finiteness of the latter integral in \eqref{form6} can be justified as follows. For any $e \in \S^1$, it is clear that
$$ \int_{\R^{2}} |\pi_{2}(\xi)|^{s - 2}(1 + |\xi|)^{-\tau} \, d\xi = \int_{\R^{2}} |\xi \cdot e|^{s - 2}(1 + |\xi|)^{-\tau} \, d\xi, $$
since $\pi_{2}(\xi) = \xi \cdot (0,1)$. If $\sigma$ is the uniformly distributed probability measure on $\S^1$, it follows that
$$ \int_{\R^{2}} |\pi_{2}(\xi)|^{s - 2}(1 + |\xi|)^{-\tau} \, d\xi = \int_{\R^{2}} \left(\int_{\S^1} \left|\frac{\xi}{|\xi|} \cdot e\right|^{s - 2} \, d\sigma(e) \right) |\xi|^{s - 2}(1 + |\xi|)^{-\tau} \, d\xi. $$
Finally, one uses the fact that $s > 1$ to deduce the uniform bound
$$ \int_{\S^1} |\theta \cdot e|^{s - 2} \, d\sigma(e) \lesssim_{s} 1, \qquad  \theta \in \S^1, $$
which, recalling that $s < \tau$, proves that
$$ \int_{\R^{2}} |\pi_{2}(\xi)|^{s - 2}(1 + |\xi|)^{-\tau} \, d\xi \lesssim \int_{\R^{2}} (1 + |\xi|)^{(s - \tau) - 2} \, d\xi < \infty. $$
\end{proof}

\subsection{Proof of Theorem \ref{noDecay}}

Consider the sets $U_{i} \subset C[0,1]$, $i \in \N$, defined by
$$ U_{i} := \bigg\{f \in C[0,1] : \exists \, R = R_{f,i} \geq i \text{ such that } \inf_{\mu \in \cP(G_{f})} \sup_{i \leq |\xi| \leq R} |\widehat{\mu}(\xi)| > \tfrac{1}{5} \bigg\}. $$
Then:
\begin{itemize}
\item The sets $U_{i}$ are open in $C[0,1]$, see Proposition \ref{open} below.
\item The sets $U_{i}$ are dense in $C[0,1]$, see Proposition \ref{dense} below.
\end{itemize}
Given this information, we may infer that the set
$$\cR := \bigcap_{i \in \N} U_i \subseteq C[0,1]$$ 
is residual by definition. Now consider a function $f$ in $\cR$ and fix $\mu \in \cP(G_f)$.  For each $i \in \mathbb{N}$, we have that $\mu$ satisfies $|\widehat{\mu}(\xi)| \geq 1/5$ for some $\xi \in \R^{2}$ with $|\xi| \geq i$.  This means precisely that 
$$ \limsup_{|\xi| \to \infty} |\widehat{\mu}(\xi)| \geq \frac{1}{5},$$
which proves Theorem \ref{noDecay}.

To prove that the sets $U_{i}$ are open in $C[0,1]$, we need the following lemma:
\begin{lemma}\label{lum3} 
Let $g,h \in C[0,1]$. Consider the mapping $T_{g,h} \colon G_{g} \to G_{h}$ defined by
$$ T_{g,h}(x,g(x)) = (x,h(x)). $$
Then
$$  |\widehat{\mu}(\xi) - \widehat{T_{g,h\sharp}\mu}(\xi)| \leq 2\pi|\xi| \cdot \|g - h\|_{\infty} $$
for any $\mu \in \cP(G_{g})$.
\end{lemma}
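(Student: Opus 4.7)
The plan is to unwind both Fourier transforms via the definition, use the pushforward formula to put them on the same measure space, and then estimate the integrand pointwise using the elementary inequality $|1 - e^{it}| \leq |t|$ valid for all $t \in \R$.

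First, I would write
$$\widehat{\mu}(\xi) = \int_{G_{g}} e^{-2\pi i z \cdot \xi} \, d\mu(z)$$
and, applying the change of variables formula for pushforwards,
$$\widehat{T_{g,h\sharp}\mu}(\xi) = \int_{G_{g}} e^{-2\pi i T_{g,h}(z) \cdot \xi} \, d\mu(z).$$
Subtracting these and bringing the difference inside the integral reduces the task to a pointwise bound on
$$|e^{-2\pi i z \cdot \xi} - e^{-2\pi i T_{g,h}(z) \cdot \xi}|$$
for $z = (x,g(x)) \in G_{g}$.

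Next, I would factor out the unimodular term $e^{-2\pi i z \cdot \xi}$, whose absolute value is $1$, so the quantity above equals $|1 - e^{-2\pi i (T_{g,h}(z) - z)\cdot \xi}|$. Since $T_{g,h}(x,g(x)) - (x,g(x)) = (0, h(x) - g(x))$, the dot product in the exponent is simply $(h(x) - g(x)) \xi_{2}$, where $\xi = (\xi_{1},\xi_{2})$. Applying $|1 - e^{it}| \leq |t|$ then yields
$$|e^{-2\pi i z \cdot \xi} - e^{-2\pi i T_{g,h}(z) \cdot \xi}| \leq 2\pi |h(x) - g(x)| \cdot |\xi_{2}| \leq 2\pi \|g - h\|_{\infty} \cdot |\xi|.$$

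Finally, integrating this uniform bound against $\mu$ (a probability measure) completes the estimate. There is no real obstacle here: the lemma is a routine continuity-of-Fourier-transform statement, and the only thing one must be careful about is the application of the pushforward/change-of-variables formula to recast $\widehat{T_{g,h\sharp}\mu}$ as an integral over $G_{g}$, after which everything is pointwise and elementary.
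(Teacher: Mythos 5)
Your argument is correct and is essentially the same as the paper's: both reduce the claim to a pointwise Lipschitz-type bound on the complex exponential (the paper states that $x \mapsto e^{-2\pi i x\cdot\xi}$ is $2\pi|\xi|$-Lipschitz, you use $|1-e^{it}|\le|t|$, which amounts to the same thing), and the only cosmetic difference is that the paper pulls both measures back to $[0,1]$ via $\pi_{\sharp}\mu$ while you keep the integral over $G_{g}$ via the pushforward change of variables. Your observation that only $|\xi_{2}|$ enters is a slight sharpening but does not change the substance.
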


\begin{proof}
Fix $\mu \in \cP(G_g)$ and write $\nu = T_{g,h\sharp} \mu$. Then
$$\mu = (\cdot,g(\cdot))_{\sharp}(\pi_{\sharp}\mu) \quad \text{ and} \quad \nu = (\cdot,h(\cdot))_\sharp (\pi_\sharp \mu),$$
where $\pi(x,y) = x$ is the projection onto the $x$-coordinate. Observing that the mapping $x \mapsto \exp(-2\pi i x \cdot \xi)$ is $2\pi|\xi|$-Lipschitz, we have the estimate
\begin{align*} |\widehat{\mu}(\xi) - \widehat{\nu}(\xi)| & = \Big|\int e^{-2\pi i x \cdot \xi} \, d\mu(x) - \int e^{-2\pi i x \cdot \xi} \, d\nu(x) \Big|\\
& \leq \int |\exp(-2\pi i(t,h(t)) \cdot \xi) - \exp(-2\pi i(t,g(t)) \cdot \xi) | \, d\pi_\sharp \mu (t)\\
& \leq  2\pi|\xi| \cdot \int |h(t)-g(t)|\, d\pi_\sharp \mu (t)  \\ 
&\leq 2\pi |\xi| \cdot \|h - g\|_{\infty}.
\end{align*}
\end{proof}

\begin{proposition}\label{open} 
The sets $U_{i}$ defined in the proof of Theorem \ref{noDecay} are open.
\end{proposition}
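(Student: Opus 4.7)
The plan is to show that membership in $U_{i}$ persists under small sup-norm perturbations by \emph{transferring} measures on $G_{g}$ to measures on $G_{f}$ via the pushforward under $T_{g,f}$, and then comparing Fourier transforms using Lemma \ref{lum3}. The key point is that the critical frequency window $[i,R]$ is \emph{bounded}, so the error term $2\pi|\xi|\|f-g\|_{\infty}$ supplied by Lemma \ref{lum3} can be made arbitrarily small uniformly in $\xi$ on this window.

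First, I would fix $f\in U_{i}$ and unpack the definition: choose $R\geq i$ and a number $\alpha>1/5$ such that
$$\inf_{\mu\in\cP(G_{f})}\sup_{i\leq|\xi|\leq R}|\widehat{\mu}(\xi)|\geq\alpha.$$
Now take any $g\in C[0,1]$ (to be constrained shortly) and any $\nu\in\cP(G_{g})$. Because $T_{g,f}\colon G_{g}\to G_{f}$ is continuous and surjective, the pushforward $\mu:=T_{g,f\sharp}\nu$ is a well-defined element of $\cP(G_{f})$, and in particular satisfies $\sup_{i\leq|\xi|\leq R}|\widehat{\mu}(\xi)|\geq\alpha$.

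Next, I would apply Lemma \ref{lum3} with the roles $(g,h)=(g,f)$ to obtain
$$|\widehat{\nu}(\xi)-\widehat{\mu}(\xi)|\leq 2\pi|\xi|\cdot\|f-g\|_{\infty}\leq 2\pi R\|f-g\|_{\infty},\qquad |\xi|\leq R.$$
Since this error bound is uniform in $\nu$, the reverse triangle inequality yields
$$\inf_{\nu\in\cP(G_{g})}\sup_{i\leq|\xi|\leq R}|\widehat{\nu}(\xi)|\geq\alpha-2\pi R\|f-g\|_{\infty}.$$
Setting $\delta:=(\alpha-1/5)/(2\pi R)>0$, every $g$ in the open ball $B(f,\delta)\subset C[0,1]$ witnesses the defining condition of $U_{i}$ with the \emph{same} $R$, so $B(f,\delta)\subset U_{i}$.

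Nothing here looks technically hard: the compactness of $[i,R]$ absorbs the $|\xi|$ blow-up in Lemma \ref{lum3}, and the uniformity in $\nu$ (which is what allows us to descend from a statement about each $\nu$ to one about the infimum) comes for free because the error bound depends only on $R$ and $\|f-g\|_{\infty}$. The only conceptual care needed is to verify that pushforward under $T_{g,f}$ actually lands in $\cP(G_{f})$, which is immediate from continuity and surjectivity of $T_{g,f}$.
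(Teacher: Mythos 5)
Your proof is correct and follows essentially the same route as the paper: push forward an arbitrary $\nu\in\cP(G_{g})$ to $\cP(G_{f})$ via $T_{g,f}$, invoke Lemma \ref{lum3}, use the boundedness of the frequency window $i\leq|\xi|\leq R$ to make the error $2\pi R\|f-g\|_{\infty}$ small, and take $\delta$ proportional to the gap above $1/5$ divided by $2\pi R$, keeping the same $R$ as witness. The only difference is cosmetic (you phrase the gap as $\alpha-1/5$ and use the reverse triangle inequality uniformly, where the paper fixes an $\epsilon>0$), so there is nothing to add.
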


\begin{proof} Fix $i \in \N$ and $f \in U_{i}$, and let $R := R_{f,i} \geq i$ be the number appearing in the definition of $U_{i}$. Then, there exists $\epsilon > 0$ such that the following holds. For any measure $\nu \in \cP(G_{f})$ we may find a point $\xi \in \R^{2}$ such that $i \leq |\xi| \leq R$ and $|\widehat{\nu}(\xi)| > 1/5 + \epsilon$. With $\delta := \epsilon/(2\pi R) > 0$, we will now prove that $B(f,\delta) \subset U_{i}$. Fix $g \in B(f,\delta)$ and let $\mu \in \cP(G_{g})$ be arbitrary. We aim to find a point $\xi \in \R^{2}$ with $i \leq |\xi| \leq R$ such that $|\widehat{\mu}(\xi)| > 1/5$. First, note that $T_{f,g\sharp}\mu \in \mathcal{P}(G_{f})$, so there exists a point $\xi \in \R^{2}$ such that $i \leq |\xi| \leq R$ and
$$ |\widehat{T_{g,f\sharp}\mu}(\xi)| > \frac{1}{5} + \epsilon. $$
Lemma \ref{lum3} then shows that
$$ |\widehat{T_{g,f\sharp}\mu}(\xi) - \widehat{\mu}(\xi)| \leq 2\pi|\xi| \cdot \|g - f\|_{\infty} \leq 2\pi R \cdot \delta = \epsilon, $$
which gives
$$ |\widehat{\mu}(\xi)| \geq |\widehat{T_{g,f\sharp}\mu}(\xi)| - \epsilon > \frac{1}{5}. $$
This proves that $g \in U_{i}$ with $R_{g,i} := R$.
\end{proof}

\begin{proposition}\label{dense} 
The sets $U_{i}$ defined in the proof of Theorem \ref{noDecay} are dense in $C[0,1]$.
\end{proposition}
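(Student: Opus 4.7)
The plan is to approximate $f$ by a function $g$ whose graph carries many horizontal plateaus at heights lying on the commensurable grid $K^{-1}\Z$ for a large integer $K \geq i$. Concretely, fixing such $K$ and $N \gg K$, partition $[0,1]$ into equal sub-intervals $J_{k} = [k/N, (k+1)/N]$, pick $c_{k} \in K^{-1}\Z$ with $|c_{k} - f(k/N)| \leq 1/K$, and set $g = c_{k}$ on a plateau $P_{k} \subset J_{k}$ occupying almost all of $J_{k}$, linearly interpolating between $c_{k-1}$ and $c_{k}$ on a short transition interval $T_{k}$ of half-width $\eta$ centred at $k/N$. Uniform continuity of $f$ makes $\|g - f\|_{\infty} < \epsilon$ for $K, N$ large and $\eta$ small.

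For $\mu \in \cP(G_{g})$, let $\tilde{\mu} \in \cP([0,1])$ be the pullback under $x \mapsto (x, g(x))$, and set $P = \bigcup_{k} P_{k}$ and $T = [0,1]\setminus P$. The plan is to beat $1/5$ either at the vertical frequency $(0, K)$ or at the horizontal frequency $(NM, 0)$ for an auxiliary large integer $M$. At $(0, K)$, the commensurability $K c_{k} \in \Z$ makes the integrand $1$ on $P$, yielding
\begin{displaymath}
\widehat{\mu}(0, K) = \tilde{\mu}(P) + R, \qquad |R| \leq \tilde{\mu}(T),
\end{displaymath}
so $|\widehat{\mu}(0, K)| > 1/5$ whenever $\tilde{\mu}(P) > 3/5$. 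Symmetrically, centring each $T_{k}$ at the commensurable point $k/N$ and choosing $NM\eta \ll 1$ makes the integrand approximately $1$ on $T$ at the frequency $(NM, 0)$, giving
\begin{displaymath}
\widehat{\mu}(NM, 0) = \tilde{\mu}(T) + R', \qquad |R'| \leq \tilde{\mu}(P) + O(NM\eta),
\end{displaymath}
so $|\widehat{\mu}(NM, 0)| > 1/5$ whenever $\tilde{\mu}(T) > 3/5$.

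The remaining middle regime $\tilde{\mu}(P), \tilde{\mu}(T) \in [2/5, 3/5]$ is the main obstacle. Here the plan is to bring in a third frequency $(NM, K)$ and exploit the identity $\widehat{\mu|_{P}}(NM, K) = \widehat{\mu|_{P}}(NM, 0)$ forced by $Kg \in \Z$ on $P$, which constrains the phases linking $\widehat{\mu}(0, K)$, $\widehat{\mu}(NM, 0)$ and $\widehat{\mu}(NM, K)$. Already for the model measure $\tilde{\mu} = \alpha \delta_{x_{P}} + \beta \delta_{x_{T}}$ with $x_{P} \in P$, $x_{T} \in T$ and $\alpha + \beta = 1$, a short trigonometric computation gives $\max(|\widehat{\mu}(0, K)|, |\widehat{\mu}(NM, 0)|, |\widehat{\mu}(NM, K)|) \geq 1/2$ regardless of $\alpha, \beta, x_{P}, x_{T}$. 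Extending this phase-cancellation argument uniformly to arbitrary $\tilde{\mu}$ (in particular to those with complicated atomic/continuous decompositions on both $P$ and $T$) so as to produce a clean lower bound strictly exceeding $1/5$ in the middle regime is the technical crux of the proof.

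Finally, once for each $\mu \in \cP(G_{g})$ one has a frequency $\xi_{\mu} \in \R^{2}$ with $|\xi_{\mu}| \geq i$ and $|\widehat{\mu}(\xi_{\mu})| > 1/5 + \delta$ for some $\delta > 0$ independent of $\mu$, the weak-$*$ compactness of $\cP(G_{g})$ together with the continuity of $\mu \mapsto \widehat{\mu}(\xi)$ yields finitely many $\xi$'s covering $\cP(G_{g})$. This gives a finite $R = R_{g, i}$ for which $\sup_{i \leq |\xi| \leq R}|\widehat{\mu}(\xi)| > 1/5$ holds uniformly for every $\mu \in \cP(G_{g})$, showing $g \in U_{i}$.
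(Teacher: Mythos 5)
Your construction of $g$ (plateaus at heights in $K^{-1}\Z$, narrow transition zones centred at the commensurable points $k/N$) is essentially the paper's construction of its dense classes of ``good functions'', and splitting into cases according to where $\tilde{\mu}$ puts its mass is the right shape of argument. But as written the proof has a genuine gap, and you flag it yourself: the regime $\tilde{\mu}(P),\tilde{\mu}(T)\in[2/5,3/5]$ is exactly the hard case, you verify the three-frequency claim only for a two-atom model measure, and you explicitly leave the general case open as ``the technical crux''. It really is the crux: an adversarial $\mu$ can spread its mass over the plateaus so that $\int_P e^{-2\pi i NMx}\,d\tilde{\mu}$ has any phase and any modulus up to $\tilde{\mu}(P)$, and over the transitions so that $\int_T e^{-2\pi i Kg(x)}\,d\tilde{\mu}$ has any phase, so your two single-frequency bounds at $(0,K)$ and $(NM,0)$ genuinely fail when $\tilde{\mu}(P)\approx\tilde{\mu}(T)\approx 1/2$; without the missing step there is no proof. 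For what it is worth, your own route can be completed: setting $p=\tilde{\mu}(P)$, $w=\int_P e^{-2\pi iNMx}\,d\tilde{\mu}$, $z=\int_T e^{-2\pi iKg(x)}\,d\tilde{\mu}$, one has exactly $\widehat{\mu}(0,K)=p+z$ (since $Kg\in\Z$ on $P$), and up to errors $O(NM\eta)$ also $\widehat{\mu}(NM,0)=w+\tilde{\mu}(T)$ and $\widehat{\mu}(NM,K)=w+z$; since $(p+z)+(w+\tilde{\mu}(T))-(w+z)=p+\tilde{\mu}(T)=1$, the three moduli sum to at least $1-O(NM\eta)$, so the largest exceeds $1/5$ uniformly in $\mu$. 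This also makes your final weak-$*$ compactness paragraph unnecessary (and note that, as stated, that paragraph presupposes the uniform $\delta$ which is precisely what had not been established).

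The paper avoids the three-way case analysis altogether. Its good functions have the heights of the horizontal piece and the abscissae of the vertical piece both lying in $\{t:\cos(2\pi Nt)\geq 0.99 \text{ and } \cos(4\pi Nt)\geq 0.99\}$, and it tests $\mu$ at the two harmonics $N$ and $2N$ in the direction of whichever piece carries mass at least $1/2$. The pointwise inequality $\cos\theta+\cos 2\theta\geq -9/8$ means the other piece can only subtract $9/16$, so $\Rea\bigl(\widehat{\pi_{2\sharp}\mu}(N)+\widehat{\pi_{2\sharp}\mu}(2N)\bigr)\geq 0.99-\tfrac{9}{16}>\tfrac{2}{5}$ (or the same with $\pi_1$), giving a uniform bound at one of four fixed frequencies and $R_{g,M}=2N$ at once. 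The moral difference is that the paper's two-harmonic trick makes the worst-case cancellation from the ``wrong'' piece strictly better than total, which is exactly the uniformity your middle regime was missing.
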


The proof is a combination of Lemmas \ref{denseG} and \ref{incG} below. We start by introducing dense classes $\cG_{M} \subset C[0,1]$, $M \in \N$, of `good functions'.

\begin{definition} 
Fix $M \in \N$. A function $g \in C[0,1]$ is a member of $\cG_{M}$, if the following holds for some $N \geq M$. The graph $G_{g}$ can be expressed as a union
$$ G_{g} = H_{g} \cup V_{g} $$
of a `horizontal piece' $H_{g}$ and a `vertical piece' $V_{g}$ such that $H_{g}$ and $V_{g}$ are compact, and
$$ \pi_{1}(V_{g}), \pi_{2}(H_{g}) \subset \{t \in \R : \cos(2\pi Nt) \geq 0.99 \text{ and } \cos(4\pi Nt) \geq 0.99\}. $$
Here $\pi_{i}$, $i \in \{1,2\}$, is the projection onto the $i^{\mathrm{th}}$ coordinate.
\end{definition}

\begin{lemma}
\label{denseG} 
The sets $\cG_{M}$ are dense in $C[0,1]$ for any $M \in \N$.
\end{lemma}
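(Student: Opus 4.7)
Given $f \in C[0,1]$ and $\epsilon > 0$, the plan is to construct $g \in \cG_{M}$ with $\|g-f\|_{\infty} < \epsilon$, shaped like a staircase: long horizontal plateaus at heights in the good set
$$ E_{N} := \{t \in \R : \cos(2\pi N t) \geq 0.99 \text{ and } \cos(4\pi N t) \geq 0.99\}, $$
joined by short affine transitions whose $x$-projections lie inside $E_{N}$. The graph over the plateaus will play the role of $H_{g}$ (its $y$-projection is a finite subset of $E_{N}$), and the graph over the transitions will play the role of $V_{g}$ (its $x$-projection lies in $E_{N}$).

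The first step is to describe $E_{N}$ explicitly. The condition $\cos(2\pi Nt) \geq 0.99$ forces $Nt$ to lie within $\alpha := \arccos(0.99)/(2\pi)$ of an integer, while $\cos(4\pi Nt) \geq 0.99$ forces $Nt$ to lie within $\alpha/2$ of an element of $\tfrac{1}{2}\Z$. Since $\alpha < 1/4$, the half-integer constraint must be realised at an actual integer, so
$$ E_{N} \;=\; \bigcup_{m \in \Z}\left[\tfrac{m}{N} - \tfrac{\alpha}{2N},\; \tfrac{m}{N} + \tfrac{\alpha}{2N}\right], $$
a disjoint union of short closed intervals centred at the grid points $m/N$; in particular $E_{N}$ is $(1/N)$-dense in $\R$.

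Next, I would pick $N \geq M$ large enough that $1/N < \epsilon/4$ and that the oscillation of $f$ on every interval of length $1/N$ is at most $\epsilon/4$ (possible by uniform continuity). Take partition points $t_{j} := j/N \in E_{N}$ for $j = 0, \ldots, N$; for each $j$ choose $y_{j} \in E_{N}$ with $|y_{j} - f(t_{j-1})| < \epsilon/4$ (possible by $(1/N)$-density); and set $\eta := \alpha/(2N)$, so that $[t_{j}-\eta, t_{j}+\eta] \subset E_{N}$. Define $g$ to equal the constant $y_{j}$ on the plateau $[t_{j-1}+\eta, t_{j}-\eta]$ (with no trimming at the endpoints $0$ and $1$) and to interpolate affinely from $y_{j}$ to $y_{j+1}$ on each transition interval $[t_{j}-\eta, t_{j}+\eta]$, $j = 1,\ldots,N-1$. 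Letting $H_{g}$ and $V_{g}$ be the graphs of $g$ over the plateaus and over the transitions yields the required compact decomposition of $G_{g}$, and a triangle-inequality check on each piece gives $\|g - f\|_{\infty} < \epsilon/2 < \epsilon$.

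I do not anticipate a substantive obstacle: everything reduces to the elementary geometry of $E_{N}$ together with routine bookkeeping of plateau widths, ramp widths, and approximation errors. The one point worth flagging is the interaction of the two cosine constraints in the definition of $\cG_{M}$, which fortunately only shrinks the good intervals by a factor of two rather than breaking each of them into disconnected pieces, so the good set $E_{N}$ remains $(1/N)$-dense in both its roles (as a reservoir for the plateau heights and as a host for the transition intervals).
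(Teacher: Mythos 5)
Your construction is correct and is essentially the paper's own argument: approximate $f$ by a staircase whose plateau heights lie in the good set (the paper uses the grid $Q_{N}=\{k/N\}$, where the cosines equal $1$) and whose jumps are replaced by short affine ramps whose $x$-projections sit in a small neighbourhood of $Q_{N}$ inside $E_{N}$, then take $H_{g}$ and $V_{g}$ to be the plateau and ramp portions of the graph. The only quibble is bookkeeping: on a ramp $[t_{j}-\eta,t_{j}+\eta]$ the distance from $x$ to $t_{j-1}$ can slightly exceed $1/N$, so your bound comes out as $3\epsilon/4$ rather than $\epsilon/2$, which still gives $\|g-f\|_{\infty}<\epsilon$ as required.
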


\begin{proof} 
Fix $M \in \N$, $f \in C[0,1]$ and $\epsilon > 0$. Pick some large integer $N \geq M$ and let $Q_{N} := \{k/N : k \in \Z\}$. For $1\leq k \leq N - 1$, define the intervals $I_{k} \subset [0,1)$ by
$$ I_{k} := \left[\frac{(k - 1)}{N}, \frac{k}{N}\right). $$
For $k = N$, let $I_{N} = [1 - 1/N,1]$. For $1 \leq k \leq N$, choose $q_{k} \in Q_{N}$ as close to $f(k/N)$ as possible, and define
$$ \tilde{g} := \sum_{k = 1}^{N} q_{k} \cdot \chi_{I_{k}}. $$
Since $f$ is uniformly continuous, we can ensure that $\|f - \tilde{g}\|_{\infty} < \epsilon$ and $|q_{k - 1} - g_{k}| < \epsilon$, $1 \leq k \leq N$, simply by choosing $N$ large enough. Also, we have
\begin{equation}\label{form5} \pi_{2}(G_{\tilde{g}}) \subset Q_{N} \subset \{t \in \R : \cos(2\pi Nt) = 1 = \cos(4\pi Nt)\}. \end{equation}
Now, the only problem is that $\tilde{g} \notin C[0,1]$. We correct the issue in the obvious way, by replacing the `jumps' between the consecutive values $q_{k}$ and $q_{k + 1}$ by nearly vertical affine patches, see Figure \ref{fig1}. The graph of the resulting function, $g$, then naturally divides into a `horizontal part' $H_{g} \subset G_{\tilde{g}}$, which is essentially the graph of $\tilde{g}$ (modulo shortening the intervals $I_{k}$ a bit), and a `vertical part' $V_{g}$, which is the union of the nearly vertical affine patches introduced above.

\begin{figure}[ht!]
\centering{\includegraphics[scale = 0.9]{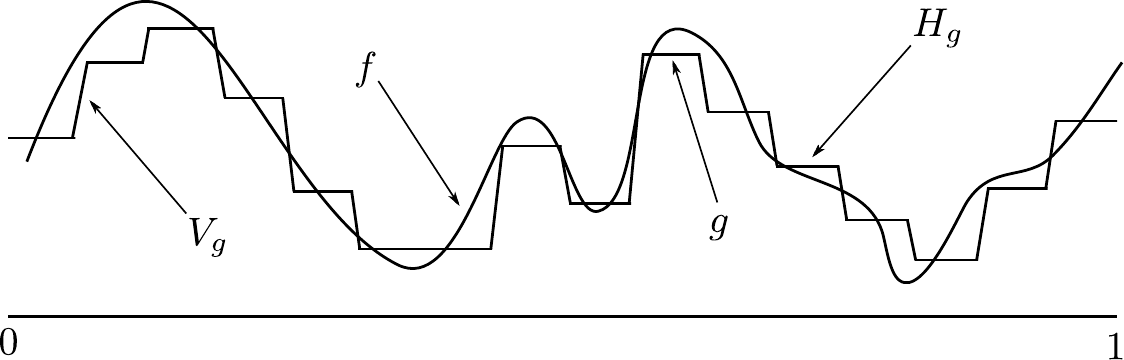}}
\caption{Constructing $g \in \cG_{M}$}\label{fig1}
\end{figure}

Moreover, given $\delta > 0$, the affine patches can be chosen so upright that $\pi_{1}(V_{g})$ is contained in the $\delta$-neighbourhood of $Q_{N}$ (if they could be taken completely vertical, we would have $\pi_{1}(V_{g}) \subset Q_{N}$). Choosing $\delta$ small enough then validates the inclusion
$$ \pi_{1}(V_{g}) \subset \{t \in \R : \cos(2\pi Nt) \geq 0.99 \text{ and } \cos(4\pi Nt) \geq 0.99\}. $$
The analogous inclusion for $\pi_{2}(H_{g})$ is implied by \eqref{form5}. Finally, it follows from the inequalities $\|f - \tilde{g}\|_{\infty} < \epsilon$ and $|q_{k - 1} - q_{k}| < \epsilon$, $1 \leq k \leq N$, that $\|f - g\|_{\infty} < 2\epsilon$. This completes the proof of the lemma.
\end{proof}

\begin{lemma}
\label{incG} 
The inclusion $\cG_{M} \subset U_{M}$ holds.
\end{lemma}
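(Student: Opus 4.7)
\textbf{Proof proposal for Lemma \ref{incG}.}

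The plan is to exhibit a small, explicit family of frequencies at which the real parts of $\widehat{\mu}$ can be lower bounded uniformly over $\mu \in \cP(G_g)$, by integrating a carefully chosen cosine polynomial. Fix $g \in \cG_M$ with witness $N \geq M$ and decomposition $G_g = H_g \cup V_g$ as in the definition. I will take $R := 2N$ and work with the four frequencies
\[
\Xi := \{(0, N),\,(0, 2N),\,(N, 0),\,(2N, 0)\},
\]
each of Euclidean norm $N$ or $2N$, so the requirement $M \leq |\xi| \leq R$ is automatic. For any $\mu \in \cP(G_g)$, taking real parts of the four exponentials and summing gives
\[
\sum_{\xi \in \Xi} \mathrm{Re}\,\widehat{\mu}(\xi) \;=\; \int_{G_g} \Phi(x,y)\, d\mu,
\]
where $\Phi(x,y) := [\cos(2\pi N x) + \cos(4\pi N x)] + [\cos(2\pi N y) + \cos(4\pi N y)]$.

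The central step is a pointwise lower bound $\Phi \geq 0.855$ on $G_g$. Using the double-angle identity $\cos(4\pi N t) = 2\cos^2(2\pi N t) - 1$ and writing $u = \cos(2\pi N t) \in [-1,1]$, each bracket takes the form $u + 2u^2 - 1$, whose minimum on $[-1,1]$ is attained at $u = -1/4$ with value $-9/8$. On the good set $\{t : \cos(2\pi N t) \geq 0.99 \text{ and } \cos(4\pi N t) \geq 0.99\}$, the same bracket is $\geq 1.98$. Now for any $(x, y) \in G_g$, either $(x, y) \in H_g$, in which case $y \in \pi_2(H_g)$ lies in the good set, or $(x, y) \in V_g$, in which case $x \in \pi_1(V_g)$ lies in the good set. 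In either case, one bracket is $\geq 1.98$ and the other $\geq -9/8$, so $\Phi(x, y) \geq 1.98 - \tfrac{9}{8} = 0.855$ uniformly on $G_g$.

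Integrating this pointwise bound against the probability measure $\mu$ yields $\sum_{\xi \in \Xi} \mathrm{Re}\,\widehat{\mu}(\xi) \geq 0.855$, and averaging over the four elements of $\Xi$ gives
\[
\max_{\xi \in \Xi} |\widehat{\mu}(\xi)| \;\geq\; \max_{\xi \in \Xi} \mathrm{Re}\,\widehat{\mu}(\xi) \;\geq\; \tfrac{0.855}{4} \;=\; 0.21375 \;>\; \tfrac{1}{5}.
\]
Thus there is $\xi \in \Xi$ with $M \leq |\xi| \leq R$ and $|\widehat{\mu}(\xi)| > 1/5$, which places $g$ in $U_M$ with $R_{g, M} := 2N$.

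The only non-routine ingredient is the choice of test polynomial. The joint condition $\cos(2\pi N t), \cos(4\pi N t) \geq 0.99$ imposed in the definition of $\cG_M$ is precisely what makes the second harmonic available: combining the two modes improves the worst-case global lower bound for $\cos(2\pi N t) + \cos(4\pi N t)$ from the trivial $-2$ to $-9/8$, and this improvement is exactly what allows $(1.98 - \tfrac{9}{8})/4$ to clear the $1/5$ threshold. With only the first harmonic one would obtain at best $(0.99 - 1)/2$, which is negative, so the second harmonic is not a convenience but a necessity.
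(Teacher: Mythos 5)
Your proof is correct and is essentially the paper's argument: the same four frequencies $(0,N)$, $(0,2N)$, $(N,0)$, $(2N,0)$, the same double-angle bound $\cos(2\pi Nt)+\cos(4\pi Nt)\geq -\tfrac{9}{8}$, the same $1.98$ lower bound on the good set, and the same witness $R_{g,M}=2N$. The only difference is organizational: the paper first splits into cases according to whether $\mu(H_g)\geq 1/2$ or $\mu(V_g)\geq 1/2$ and then works with just two frequencies via the decomposition $\mu=\mu_H+\mu_V$, whereas you sum over all four frequencies and use the pointwise bound $\Phi\geq 1.98-\tfrac{9}{8}$ on all of $G_g$, which avoids the case analysis and yields the same constant $0.21375>1/5$.
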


\begin{proof} 
Fix $g \in \cG_{M}$, let $N \geq M$ be the number given in the definition of $\cG_{M}$, and pick a measure $\mu \in \cP(G_{g})$. Since $\mu(G_{g}) = 1$, we must have either $\mu(H_{g}) \geq 1/2$ or $\mu(V_{g}) \geq 1/2$. Suppose that 
\begin{equation}\label{form4} \mu(H_{g}) \geq \frac{1}{2}. \end{equation}
We decompose the measure $\mu$ into horizontal and vertical pieces as follows: $\mu = \mu_{H} + \mu_{V}$, where $\mu_{H} = \mu|_{H_{g}}$ and $\mu_{V} = \mu - \mu_{H}$. We infer from \eqref{form4} that $\mu_{H}(\R^{2}) \geq 1/2$ and $\mu_{V}(\R^{2}) \leq 1/2$. Now, the key observation is that the $\pi_{2}$-projection of the support of $\mu_{H}$ lies in the set 
$$\{t \in \R : \cos(2\pi Nt) \geq 0.99 \text{ and } \cos(4\pi Nt) \geq 0.99\}$$ 
(if we had \eqref{form4} for $V_{g}$, we would use the $\pi_{1}$-projection instead). This yields the estimate
\begin{align*} |\widehat{\mu}(0,N) + \widehat{\mu}(0,2N)| & \geq \Rea (\widehat{\pi_{2\sharp}\mu}(N) + \widehat{\pi_{2\sharp}\mu}(2N))\\
& = \Rea (\widehat{\pi_{2\sharp}\mu_{H}}(N) + \widehat{\pi_{2\sharp}\mu_{H}}(2N))\\
&\qquad + \Rea (\widehat{\pi_{2\sharp}\mu_{V}}(N) + \widehat{\pi_{2\sharp}\mu_{V}}(2N))\\
& = \int_{\R} \cos(2\pi Nt) + \cos(4\pi Nt) \, d\pi_{2\sharp}\mu_{H}(t)\\
&\qquad + \int_{\R} \cos(2\pi Nt) + \cos(4\pi Nt) \, d\pi_{2\sharp}\mu_{V}(t) \geq 0.99 - \frac{9}{16} > \frac{2}{5}, \end{align*} 
which means that either $|\widehat{\mu}(0,N)| > 1/5$ or $|\widehat{\mu}(0,2N)| > 1/5$. On the last line, we used the fact that 
$$\cos(2\pi Nt) + \cos(4\pi Nt) \geq -\frac{9}{8},$$
which can be seen by first applying the trigonometric identity $\cos 2\alpha = 2\cos^{2} \alpha - 1$ to the latter summand and then making the substitution $\cos(2\pi Nt) \mapsto u$ and minimizing the resulting polynomial.

We have now deduced that no matter how we choose the measure $\mu \in \cP(G_{g})$, we always have 
$$\max\{|\widehat{\mu}(0,N)|,|\widehat{\mu}(N,0)|,|\widehat{\mu}(0,2N)|,|\widehat{\mu}(2N,0)|\} \geq  c > 1/5$$ 
with $c = 0.99 - 9/16$. This means that $g \in U_{M}$ with the choice $R_{g,M} = 2N$.
 \end{proof}
 
 This concludes the proofs of Proposition \ref{dense} and of Theorem \ref{noDecay}.


\section{Extensions and open questions}

\subsection{Extensions to functions defined on closed subsets of $[0,1]$} \label{spaceE}
In this section, $E \subset [0,1]$ is an arbitrary closed set, and $G_{f} = \{(x,f(x)) : x \in E\} \subset \R^{2}$ for functions $f \colon E \to \R$.  

(1) The proof of Theorem \ref{attainable} shows, without modifications, that $\Fd G_{f} \leq 1$ for functions $f \colon E \to \R$. Moreover, if $\mu \in \cP(G_{f})$ is any measure, the projection $\mu_{1} \in \cP(E)$ of $\mu$ onto the $x$-coordinate satisfies
$$\widehat{\mu_{1}}(t) = \widehat{\mu}(t,0), \qquad t \in \R.$$
These observations combined imply that $\Fd G_f \leq \Fd E.$

(2) As mentioned in the proof of Proposition \ref{attainable}, Kaufman's work \cite{kaufman} implies that for any $0 \leq s \leq 1$ there exists a closed set $E \subset [0,1]$ and a function $f \in C(E)$ such that the graph $G_f$ has
$$\Fd G_f = s.$$
This still leaves open the following question:
\begin{question}
For any closed set $E \subset [0,1]$ and for any $0 \leq s \leq \Fd E$, does there exist a function $f \in C(E)$ with
$$\Fd G_f = s?$$
\end{question}

(3) Theorem \ref{noDecay} can be generalised for typical functions in $f \in C(E)$. Consider the sets $U_i \subset C[0,1]$ in the proof of Theorem \ref{noDecay}. Define the projection $\Phi : C[0,1] \to C(E)$ by
$$\Phi(f)(x) = f(x), \quad x \in E, \quad f \in C[0,1]$$
Then $\Phi$ is a surjective continuous linear operator (surjectivity follows from the `affine extension' argument in the proof of Proposition \ref{attainable}), and so by the open mapping theorem \cite[Theorem 2.11]{rudin} $\Phi$ is an open map.  It follows that $\Phi(U_i) \subset C(E)$ is open and dense for each $i \in \mathbb{N}$ and so
$$\cR := \bigcap_{i \in \N} \Phi(U_i) \subseteq C(E)$$ 
is residual by definition. Now consider a function $f$ in $\cR$ and fix $\mu \in \cP(G_f)$.  For each $i \in \mathbb{N}$, $f=\Phi(g_i)$ for some $g_i \in U_i$ and so, for each $i$, $\mu \in \cP(G_{g_i})$. Since $g_i \in U_i$, we have that $\mu$ satisfies $|\widehat{\mu}(\xi)| \geq 1/5$ for some $\xi \in \R^{2}$ with $|\xi| \geq i$.  This means precisely that 
$$ \limsup_{|\xi| \to \infty} |\widehat{\mu}(\xi)| \geq \frac{1}{5},$$
which shows that we can replace $[0,1]$ with $E$ in Theorem \ref{noDecay}.

\subsection{Prevalent Fourier dimension of graphs}

A further direction one could take this work in is to consider an alternative notion of genericity.  Here we use typicality, but it is equally natural to use the theory of \emph{prevalence}.  This notion is more measure theoretic than topological in nature and as such often gives very different answers to genericity questions.  For example, the \emph{typical} Hausdorff dimension of a graph of a function in $C[0,1]$ is one, but the \emph{prevalent} Hausdorff dimension is two.  These two answers are as different as possible and give a good indication of the stark differences in the two theories.  Prevalence was formulated by Hunt, Sauer and Yorke in the mid 90s \cite{prevalence1} and has been used to study the generic dimensions of graphs of continuous functions on numerous occasions, see \cite{BH, horizon, fraserhyde, lowerprevalent, mcclure, images}.  The most general result to date is essentially due to Bayart and Heurteaux \cite{BH}, although their result was slightly extended in \cite{images}, and states that for an arbitrary uncountable closed set $E \subset [0,1]$, the Hausdorff dimension of the graph of a prevalent function $f \in C(E)$ is as large as possible, namely $\Hd E + 1$.  We ask the following question:

\begin{question}
What is the prevalent Fourier dimension of $G_f$ for $f \in C(E)$?
\end{question}

Recalling the discussion in Section \ref{spaceE}, the answer can be at most $\Fd E \leq \Hd E$, which, combined with the with the fact that prevalently $\Hd G_{f} = \Hd E + 1$ mentioned above, shows that the graph of the prevalent function in $C(E)$ is not a Salem set. So, what should we expect from the Fourier dimension of the graph of a prevalent function? Since for the other dimensions, the prevalent dimension of a graph is as big as possible, one might conjecture the same to be true for Fourier dimension.  A possible proof strategy would be to follow the work of Bayart and Heurteaux and use fractional Brownian motion (or some other suitable stochastic process) on $E$ to obtain lower bounds for the prevalent Fourier dimension.

\end{document}